\documentclass[12pt, a4paper]{amsart}
\usepackage[T2A]{fontenc}
\usepackage{amsmath}
\usepackage{extarrows}
\usepackage{amssymb,amsthm,amscd}
\usepackage{graphicx}
\usepackage[lmargin=3cm,rmargin=3cm,tmargin=2cm,bmargin=2cm, marginpar=2cm ]{geometry}

\usepackage[shortlabels]{enumitem}

\usepackage{hyperref} % loaded late (after geometry/enumitem), before refcheck and todonotes
\usepackage[obeyDraft]{todonotes} % https://tug.ctan.org/macros/latex/contrib/todonotes/todonotes.pdf

\theoremstyle{plain}
\newtheorem{theorem}{Theorem}[section]
\newtheorem{lemma}[theorem]{Lemma}
\newtheorem{conjecture}[theorem]{Conjecture}

\newtheorem{proposition}[theorem]{Proposition}

\theoremstyle{definition}
\newtheorem{definition}[theorem]{Definition}
\newtheorem{example}[theorem]{Example}

\newtheorem{notation}[theorem]{Notation}

\theoremstyle{remark}

\def\Aut{\operatorname{Aut}}
\def\Inn{\operatorname{Inn}}

\def\LND{\operatorname{LND}}

\def\Ker{\operatorname{Ker}}% \stackrel{\sim}{\longrightarrow} ?

\def\PP{{\mathbb P}}

\def\AA{{\mathbb A}}

\def\KK{{\mathbb K}}

\def\Ga{\mathbb{G}_{\rm a}}

\def\embed{\hookrightarrow}

\def\0{\circ}

\title[Automorphism groups of non-normal rigid affine surfaces]{Automorphism groups of non-normal rigid affine surfaces are finite-dimensional}

\author[I. Beldiev]{Ivan Beldiev}
\email{ivbeldiev@gmail.com, isbeldiev@hse.ru}
\address{HSE University, Faculty of Computer Science,
Pokrovsky blvd. 11, Moscow, 109028 Russia}

\author[A. Perepechko]{Alexander Perepechko} 
\email{a@perep.ru}
\address{
HSE University, Faculty of Computer Science,
Pokrovsky blvd. 11, Moscow, 109028 Russia} 

\thanks{This work was supported by the Theoretical Physics and Mathematics Advancement Foundation ``BASIS''}

\subjclass[2020]{Primary 14J50, 14R20; Secondary 14L30, 05C60}

\keywords{affine
surface, automorphism group, algebraic group, 
ind-group, group action, 
birational transformation, weighted dual graph}

\begin{document}

\begin{abstract}
  It was recently established by Perepechko and Zaidenberg that the automorphism group of a normal affine surface is finite-dimensional if and only if the surface admits no non-trivial action of the additive group of the base field. We extend this result to non-normal affine surfaces.
\end{abstract}
%\date{\today}

\maketitle

%%%%%%%%%%%%%%%%%%%%%%%%%%%%%%%%%%%%%%%%%%%%%%%%%%%%%%%%%%%%%%%%%%%%%%
\section{Introduction}\label{sec:intro}
We work over an algebraically closed field $\KK$ of characteristic zero.
By an \emph{affine variety}, we mean an integral affine scheme of finite type over $\KK$; an \emph{affine surface} is an affine variety of dimension~$2$. We denote by $\Ga$ the additive group $(\KK,+)$ of the base field.

Let $Y$ be an affine variety. The group $\Aut(Y)$ of regular automorphisms of $Y$ carries a natural structure of an ind-group; see \cite[Theorem~5.1.1]{FuKr18}. In particular, its \emph{neutral component} $\Aut^\circ(Y)$, i.e., the union of all irreducible algebraic subsets of $\Aut(Y)$ containing the identity, is well defined. In contrast to the case of a projective variety, the ind-group $\Aut(Y)$ is in general infinite-dimensional; in this case the neutral component $\Aut^\circ(Y)$ is not an algebraic group.

The question whether $\Aut^\circ(Y)$ is algebraic is related to the existence of $\Ga$-actions on $Y$, i.e., regular actions of the algebraic group $\Ga$ on $Y$. 
It is known (see \cite[Corollary~1.2]{FlZa05}) that $\Aut^\circ(Y)$ is not algebraic if $Y$ admits at least one non-trivial $\Ga$-action and $\dim Y \geq 2$. 
Conversely, suppose that $Y$ admits no non-trivial $\Ga$-actions; such varieties $Y$ are called \emph{rigid}. 
It is conjectured that in this case $\Aut^\circ(Y)$ is algebraic; cf.~\cite{PeRe23, PeZa26}. 
Indeed, if $Y \ncong \AA^1$, then $\Aut^\circ(Y)$ is algebraic if and only if it is an algebraic torus; see \cite[Proposition~5]{Ii77} and \cite[Theorem~1.3]{Kr17}. 
The precise statement of the conjecture is the following.

\begin{conjecture}\label{conj1}
    Let $Y$ be an affine algebraic variety over $\KK$. If $Y$ is rigid, then the group $\Aut^\circ(Y)$ is an algebraic torus of rank at most $\dim Y$.
\end{conjecture}

If Conjecture~\ref{conj1} holds, then so does the following one; cf. the list of open problems in~\cite{Kr25}.

\begin{conjecture}\label{conj2}
    If an affine variety $Y$ admits no effective $\Ga$- or $\mathbb G_m$-actions, then $\Aut(Y)$ is a discrete group.
\end{conjecture}

% The equivalence of conditions for $\Aut^\circ(Y)$ to be algebraic and an algebraic torus in the case $Y\ncong \AA^1$ is covered, e.g., in \cite[Proposition~5]{Ii77} and \cite[Theorem~1.3]{Kr17}.
% Some other partial results can be found in \cite{PeRe23, PeRe24}. 

% Conjecture~\ref{conj1} also holds, for instance, for rigid trinomial hypersurfaces (\cite[Theorem~3]{ArGa17}), for toric affine varieties (\cite[Theorem~3]{BoGa21}), for rational affine varieties with a torus action of complexity one (\cite[Theorem~6.4]{BorGa25}), and .

% In \cite{PeZa26}, the authors prove Conjecture~\ref{conj1} for normal affine surfaces. The proof involves the notions of a normal-crossing completion $(\widetilde{Y}, D)$ of a normal affine surface $Y$ and the associated weighted dual graph $\Gamma(D)$; these notions are recalled in Section~\ref{sec:NC}. The precise result proved in \cite{PeZa26} is the following theorem.

Conjecture~\ref{conj1} is known to hold, for instance, for rigid trinomial hypersurfaces (\cite[Theorem~3]{ArGa17}), for toric affine varieties (\cite[Theorem~3]{BoGa21}), and for rational affine varieties with a torus action of complexity one (\cite[Theorem~6.4]{BorGa25}). Some partial results in the general setting are obtained in \cite{PeRe23, PeRe24}.

For normal affine surfaces, the conjecture is settled in \cite{PeZa26}, as follows.

\begin{theorem}[{\cite[Theorem~1.0.3(a)]{PeZa26}}]\label{th:intro-PZ}
Let $Y$ be a normal affine surface over $\KK$. Then the neutral component of the automorphism group $\Aut^{\circ}(Y)$ is an algebraic group if and only if $Y$ admits no effective $\Ga$-action. In this case, $\Aut^{\circ}(Y)$ is an algebraic torus of rank $\le 2$.
\end{theorem}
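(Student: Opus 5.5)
Since this statement is quoted from \cite{PeZa26}, I sketch how I would prove it from scratch. The approach goes through a normal-crossing completion and its boundary dual graph.

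For the direction ``non-rigid $\Rightarrow$ not algebraic'', I would argue as follows. Let $\partial$ be the locally nilpotent derivation of a nontrivial $\Ga$-action. For every $f\in\Ker\partial$, the derivation $f\partial$ is again locally nilpotent. Letting $f$ range over polynomials of bounded degree in a nonconstant element of $\Ker\partial$ produces algebraic families of unbounded dimension inside $\Aut^\circ(Y)$. Hence $\Aut^\circ(Y)$ is not algebraic; this is \cite[Corollary~1.2]{FlZa05}. Conversely, once we know $\Aut^\circ(Y)$ is algebraic and $Y\ncong\AA^1$, it is a torus by \cite[Proposition~5]{Ii77} and \cite[Theorem~1.3]{Kr17}. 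Its rank is at most $2$ because a torus acting effectively on a surface has rank at most $\dim Y$. The real content is therefore: if $Y$ is normal and rigid, then $\Aut^\circ(Y)$ is algebraic.

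For this main direction, I would fix a minimal SNC completion $(\widetilde Y,D)$ of $Y$ with weighted dual graph $\Gamma(D)$. Every automorphism of $Y$ extends to a birational self-map of $\widetilde Y$ whose indeterminacy lies on $D$. Such a map can be factored into blow-ups and contractions of boundary curves, that is, elementary transformations of the boundary. The key claim is that rigidity bounds these transformations. Birational maps that are not biregular on $\widetilde Y$ require ``flexible'' parts of $\Gamma(D)$, namely linear chains whose weights admit reversions and elementary moves at a $0$-curve. A $0$-weighted extremal chain in $D$, or more generally a boundary zigzag of that shape, induces an $\AA^1$-fibration over a curve. By the criterion of Gizatullin, Danilov and Flenner–Zaidenberg, such a fibration yields a $\Ga$-action on $Y$. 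So rigidity should force every algebraic family in $\Aut^\circ(Y)$ to extend to a family of automorphisms of a fixed completion, possibly after passing to a bounded number of models $\widetilde Y'$.

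Since the models form finitely many possibilities up to isomorphism of pairs, $\Aut^\circ(Y)$ would embed in the automorphism group of a projective pair $(\widetilde Y,D)$ preserving $D$. That group is algebraic, so $\Aut^\circ(Y)$ is algebraic. The main obstacle is the middle step: controlling the combinatorics of $\Gamma(D)$ under automorphisms and showing that any unbounded degree growth in a connected family forces an $\AA^1$-fibration. I would first try to show that a connected algebraic family of automorphisms acts trivially on the finite set of ``stable'' boundary components, namely the non-chain vertices together with their branches. I would then analyse the remaining linear chains via the standard theory of zigzags, arguing that any chain admitting a nontrivial one-parameter family of elementary transformations contains a $0$-vertex at an end after reversion, which yields the $\Ga$-action and contradicts rigidity.
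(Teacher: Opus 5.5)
There is a genuine gap. Your peripheral steps are fine: non-rigid implies not algebraic via replicas $p(h)\partial$ with $h\in\Ker\partial$ (this is \cite[Corollary~1.2]{FlZa05}), and algebraic implies a torus of rank $\le 2$ via \cite{Ii77,Kr17} and generic orbits. Your architecture (NC-completion, dual graph, a tip $0$-vertex giving a $\PP^1$-fibration whose general fibre meets $D$ once, hence a $\Ga$-action) also matches \cite{PeZa26}.

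The gap is the implication that carries all the content, rigid $\Rightarrow$ $\Aut^\circ(Y)$ algebraic. You only assert that rigidity ``should force'' families to extend to a fixed completion, and you call this the main obstacle. In \cite{PeZa26}, as recalled in the proof of Proposition~\ref{prop:PZ-variation}, this step rests on two substantial results.
\begin{enumerate}
\item[(i)] A non-admissible \emph{extremal} linear segment can be transformed into a tip $0$-vertex \cite[Proposition~5.0.4]{PeZa26}. Hence rigidity forces all extremal segments to be admissible.
\item[(ii)] If all extremal segments are admissible, then every birational transformation of $\Gamma(D)$ is inner \cite[Proposition~4.2.5.3]{PeZa26}. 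Moreover, $\Inn^\circ(\widetilde Y,D)$ is algebraic and acts regularly on $\widetilde Y$ \cite[Proposition~2.3.3]{PeZa26}.
\end{enumerate}
Step (ii) is exactly your missing middle step, and nothing in your sketch substitutes for it.

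Two concrete points of your sketch also fail as written.

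First, finiteness of models up to isomorphism of pairs does not embed $\Aut^\circ(Y)$ into $\Aut(\widetilde Y,D)$. An automorphism extending to an isomorphism between two different models in your list need not be regular on any single one. What you need is that every element of the neutral component is regular on one fixed completion, which is again (ii).

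Second, the chain ``flexible chain $\Rightarrow$ $0$-vertex at an end $\Rightarrow$ $\Ga$-action'' only works for extremal segments. A boundary $0$-curve of valence $\ge 2$ gives a fibration whose general fibre meets $D$ at least twice. That is an $\AA^1\setminus\{0\}$-fibration, not an $\AA^1$-fibration, so it yields no $\Ga$-action. For example, $(\AA^1\setminus\{0\})^2\subset\PP^1\times\PP^1$ has boundary a cycle of four $0$-curves and is rigid. Yet it has non-regular automorphisms such as $(x,y)\mapsto(x,xy)$, which act by non-trivial transformations of the boundary. So your plan of sorting vertices into ``stable'' ones and ``remaining chains'' needs a separate argument for inner segments and cycles: you must show that the transformations they permit contribute nothing to the neutral component beyond a group acting regularly on the completion. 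Your sketch does not supply this; in \cite{PeZa26} it is part of the content of (ii).
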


In the present paper we remove the normality assumption, establishing Conjecture~\ref{conj1} for all affine surfaces. 

\begin{theorem}\label{main_result}
    Let $Y$ be a rigid, not necessarily normal, affine surface. Then the neutral component $\Aut^\circ(Y)$ is an algebraic torus of rank $\le 2$.
\end{theorem}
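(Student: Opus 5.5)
The plan is to pass to the normalization $\nu\colon X\to Y$ and reduce to Theorem~\ref{th:intro-PZ}. Since $X$ is the normalization of an affine surface, it is a normal affine surface. Every automorphism of $Y$ lifts uniquely to $X$, because normalization is functorial. This gives an injective homomorphism $\Aut(Y)\embed\Aut(X)$, and its image is the subgroup $\Aut(X,C)$ of automorphisms of $X$ preserving the conductor subscheme $C\subset X$, together with the induced gluing data on $C$. First I will check that this injection is a morphism of ind-groups and a closed embedding. On coordinate rings it is the inclusion $\mathcal O(Y)\subset\mathcal O(X)$, and the stabilizer of a subalgebra is closed. It then suffices to show that the neutral component of this closed subgroup of $\Aut(X)$ is an algebraic torus of rank $\le2$.

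If $X$ is rigid, Theorem~\ref{th:intro-PZ} says $\Aut^\circ(X)$ is a torus of rank $\le2$. Then $\Aut^\circ(Y)$ is a connected closed subgroup of it, hence again a torus of rank $\le 2$.

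The main case is when $X$ is not rigid while $Y$ is. Here $\Aut^\circ(X)$ is infinite-dimensional, and we must show that the stabilizer of the subalgebra $\mathcal O(Y)$ has finite-dimensional neutral component. Let $\partial$ be a locally nilpotent derivation on $\mathcal O(X)$. The $\Ga$-action $\exp(t\partial)$ descends to $Y$ exactly when $\partial$ preserves $\mathcal O(Y)$. Replacing $\partial$ by $f\partial$ with $f\in\Ker\partial$ vanishing to high order along $C$ shows that whenever $C$ is contained in finitely many $\Ga$-orbit closures, some replica of $\partial$ preserves $\mathcal O(Y)$. The conductor ideal $I\subset\mathcal O(Y)$ is an ideal of $\mathcal O(X)$, so if $f\in I^N$ for large $N$, then $f\partial$ maps $\mathcal O(X)$ into $I\subset\mathcal O(Y)$. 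Such an $f$ must be a nonzero element of $\Ker\partial\cap I$, which exists unless $C$ dominates the quotient curve $X/\!/\Ga$, i.e.\ unless $C$ meets the general $\Ga$-orbit. Rigidity of $Y$ therefore forces, for every LND $\partial$ on $X$, the conductor curve $C$ (or the non-normal locus) to contain a component transversal to the generic fibre of the $\mathbb A^1$-fibration defined by $\partial$.

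The final step is to bound the stabilizer using this constraint. I will use the structure of $\Aut(X)$ for non-rigid normal affine surfaces: a completion $(\widetilde X,D)$ with a weighted dual graph $\Gamma(D)$ as in \cite{PeZa26}, whose $\mathbb A^1$-fibrations correspond to extremal linear segments of $\Gamma(D)$. Any one-parameter unipotent subgroup of $\Aut^\circ(X)$ arises from some LND, yet it also preserves $C$. A $\Ga$-action preserving a curve transversal to its orbits must be trivial, since generic orbits meet $C$ in finitely many points and are mapped into themselves. So $\Aut(X,C)$ contains no $\Ga$. I then want to conclude that its neutral component $G$ is algebraic, following Kraft's criterion \cite[Theorem~1.3]{Kr17}: a connected ind-group of automorphisms of an affine variety without $\Ga$-subgroups, and generated by algebraic subgroups, is a torus. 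The expected obstacle is the non-algebraic case, which means excluding an infinite-dimensional connected $G$ with no unipotent elements, for instance increasing chains of tori. For this I plan to use the fact that $\Aut^\circ$ of a rigid surface preserves every curve of the finite "rigid" boundary configuration. I will adapt the argument of \cite{PeZa26}, which shows that the image of $G$ in the automorphisms of the graph and of the boundary divisor is finite-dimensional. Finally, the rank bound $\le 2$ follows since an effective torus acting on a surface has rank $\le\dim Y$.
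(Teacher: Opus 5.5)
Your reduction is sound up to the last step, and your descent argument is slightly simpler than the paper's. Since the conductor $I$ is an ideal of $\mathcal O(X)$, any nonzero $f\in\Ker\partial\cap I$ already gives $f\partial(\mathcal O(X))\subseteq I\subseteq\mathcal O(Y)$, with no generator bookkeeping. So rigidity of $Y$ forces $C$ to meet the general orbit of every nonzero LND, and hence no non-trivial $\Ga$-action preserves $C$; this is the content of Proposition~\ref{sending_down}.

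The genuine gap is the final step, which is the real content of the theorem: passing from ``$\Aut(X,C)$ contains no $\Ga$-subgroup'' to ``$\Aut^\circ(X,C)$ is algebraic''. The criterion you attribute to \cite[Theorem~1.3]{Kr17} is not available. Kraft's result assumes that $\Aut^\circ$ is already finite-dimensional and only then concludes that it is a torus. The implication ``no $\Ga$-subgroups $\Rightarrow$ algebraic'' is exactly Conjecture~\ref{conj1}, here for the pair $(X,C)$, and nothing tells you that $\Aut^\circ(X,C)$ is generated by algebraic subgroups. Your fallback, that $\Aut^\circ$ of a rigid surface preserves its rigid boundary configuration, does not apply either. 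In the case at hand $X$ is \emph{not} rigid, so by Theorem~\ref{th:PZ-NC} the graph $\Gamma(D)$ of a minimal NC-completion of $X$ has non-admissible extremal linear segments. Hence the argument of \cite{PeZa26} cannot be run on $(\widetilde X,D)$ alone.

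The missing idea is to run that argument on the pair $(X,C)$.
\begin{enumerate}
\item Take an NC-completion $(\widetilde X,D)$ with $\overline C$ meeting $D$ transversally.
\item Enlarge $\Gamma(D)$ by a vertex for $\overline C$, with edges for its intersection points with $D$, and mark it non-rational, i.e.\ non-contractible. Then birational transformations of the enlarged graph $\widetilde\Gamma$ are induced by birational transformations of $(\widetilde X,D)$ that keep $\overline C$.
\item Suppose $\widetilde\Gamma$ had a non-admissible extremal linear segment. Then \cite[Proposition~5.0.4]{PeZa26} would produce a tip $0$-vertex, hence an $\AA^1$-fibration of $X$ whose general fibres are disjoint from $\overline C$, so $C$ lies in finitely many fibres. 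This is exactly what your transversality statement forbids; equivalently, it yields a $\Ga$-action along the fibres preserving $C$.
\item Hence all extremal segments of $\widetilde\Gamma$ are admissible, and every birational transformation of $\widetilde\Gamma$ is inner by \cite[Proposition~4.2.5.3]{PeZa26}. So $\Aut^\circ(X,C)$ sits inside the algebraic group $\Inn^\circ(\widetilde X,D)$; this is Proposition~\ref{prop:PZ-variation}.
\end{enumerate}
Only with this in hand do your closed-embedding step, the ``no unipotent elements $\Rightarrow$ torus'' argument, and the rank bound complete the proof.
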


This completes the proof of Conjecture~\ref{conj1} in the case $\dim Y = 2$. 
The main ingredient of our proof is to pass to the normalization $\widehat{Y}$ of a non-normal affine surface $Y$ and to apply the birational techniques of \cite{PeZa26} to the normal affine surface~$\widehat{Y}$.

The paper is organized as follows. In Section~\ref{sec:nonnormal} we pass to the normalization $\pi\colon\widehat Y\to Y$: we embed $\Aut(Y)$ into the group $\Aut(\widehat Y,\widehat E)$ of automorphisms preserving the exceptional set (Proposition~\ref{embed}) and describe $\widehat E$ as the vanishing set of the conductor ideal (Proposition~\ref{cond_non_normal}). In Section~\ref{sec:Ga-lift} we show that, although an $\widehat E$-preserving $\Ga$-action on $\widehat Y$ need not descend to $Y$ (Example~\ref{counterexample}), rigidity of $Y$ nonetheless forbids such actions: $\widehat Y$ admits no non-trivial $\Ga$-action preserving $\widehat E$ (Proposition~\ref{sending_down}). In Section~\ref{sec:NC} we recall normal-crossing completions and the rigidity criterion of~\cite{PeZa26} (Theorem~\ref{th:PZ-NC}) and adapt its proof to a relative setting: for a proper closed subset $E$ of a normal affine surface $X$, if $\Aut(X,E)$ contains no $\Ga$-subgroup, then $\Aut^\circ(X,E)$ is algebraic (Proposition~\ref{prop:PZ-variation}). Applying this with $(X,E)=(\widehat Y,\widehat E)$ in Section~\ref{sec:main} shows that $\Aut^\circ(\widehat Y,\widehat E)$ is algebraic; hence so is $\Aut^\circ(Y)$ via Proposition~\ref{embed}, and rigidity forces it to be an algebraic torus (Theorem~\ref{main_result}).

\section{Normalization}\label{sec:nonnormal}

Let $Y$ be a non-normal affine variety. 
Recall that its \emph{normalization} is the unique normal affine variety $\widehat{Y}$ together with a finite birational morphism $\pi\colon\widehat{Y}\to Y$ through which every dominant morphism from a normal variety to $Y$ factors uniquely.  
We denote by $E\subseteq Y$ and $\widehat{E}\subseteq\widehat{Y}$ the \emph{exceptional sets} of $Y$ and $\widehat{Y}$ under $\pi$, respectively:
they are the smallest closed subsets such that $\pi\colon \widehat{Y}\setminus\widehat{E} \to Y\setminus E$ is an isomorphism.
In particular, $E$ is the closed set of non-normal points of $Y$, and $\widehat{E}$ is its preimage under $\pi$.
Algebraically, the algebra of regular functions $\KK[\widehat Y]$ is the integral closure of $\KK[Y]$ in the field of rational functions $\KK(Y)$, and the dual homomorphism $\pi^*$ is the inclusion $\KK[Y]\hookrightarrow \KK[\widehat Y]$.

In Proposition~\ref{embed}, we relate the automorphisms of $Y$ to those of its normalization.
Namely, we show that $\Aut(Y)$ embeds into the subgroup of automorphisms of $\widehat Y$ preserving $\widehat E$.
The following lemma seems to be well known; we state and prove it here for completeness.

\begin{lemma}\label{normal_aut}
    Every automorphism $\varphi$ of $Y$ can be lifted to a unique automorphism $\widehat \varphi$ of $\widehat Y$ such that $\pi \circ \widehat \varphi = \varphi \circ \pi$. Moreover, this lifting is functorial, i.e., $\widehat{\mathrm{id}_Y} = \mathrm{id}_{\widehat Y}$ and $\widehat{\varphi\circ \psi } = \widehat \varphi \circ \widehat \psi$ for any two automorphisms $\varphi$ and $\psi$ of $Y$.
\end{lemma}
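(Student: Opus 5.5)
The plan is to work on the level of coordinate rings and use the universal property of normalization, which the paper has just recalled. Let $\varphi\in\Aut(Y)$ and let $\varphi^*\colon \KK[Y]\to\KK[Y]$ be the dual $\KK$-algebra automorphism. Since $Y$ is integral, $\varphi^*$ extends uniquely to a field automorphism of $\KK(Y)=\operatorname{Frac}\KK[Y]$, still denoted $\varphi^*$.

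First I would show that this extension preserves $\KK[\widehat Y]$. If $f\in\KK(Y)$ is integral over $\KK[Y]$, then applying $\varphi^*$ to an equation of integral dependence shows that $\varphi^*(f)$ is also integral over $\varphi^*(\KK[Y])=\KK[Y]$. Hence $\varphi^*(\KK[\widehat Y])\subseteq\KK[\widehat Y]$. The same holds for $(\varphi^{-1})^*=(\varphi^*)^{-1}$, so the restriction is a $\KK$-algebra automorphism of $\KK[\widehat Y]$. I take $\widehat\varphi$ to be the corresponding automorphism of the affine variety $\widehat Y$. Because this automorphism of $\KK[\widehat Y]$ restricts to $\varphi^*$ on the subring $\KK[Y]=\pi^*\KK[Y]$, we get $\widehat\varphi^*\circ\pi^*=\pi^*\circ\varphi^*$. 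Dually, this is $\pi\circ\widehat\varphi=\varphi\circ\pi$.

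For uniqueness, suppose $\psi\in\Aut(\widehat Y)$ also satisfies $\pi\circ\psi=\varphi\circ\pi$. Then $\psi^*$ agrees with $\widehat\varphi^*$ on $\KK[Y]$. Both maps extend to $\KK(Y)=\KK(\widehat Y)$, where they agree because $\pi$ is birational, so the two fields of fractions coincide. Hence $\psi^*=\widehat\varphi^*$ and $\psi=\widehat\varphi$. Alternatively, $\pi$ is an isomorphism over the dense open set $Y\setminus E$, so $\psi$ and $\widehat\varphi$ agree on a dense open subset of the separated variety $\widehat Y$.

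Functoriality then follows formally from uniqueness. The identity of $\widehat Y$ satisfies the defining relation for $\mathrm{id}_Y$. The composite $\widehat\varphi\circ\widehat\psi$ satisfies
\[
\pi\circ\widehat\varphi\circ\widehat\psi=\varphi\circ\pi\circ\widehat\psi=\varphi\circ\psi\circ\pi,
\]
so it equals $\widehat{\varphi\circ\psi}$.

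None of these steps is a real obstacle. The only points needing care are the identification $\KK(\widehat Y)=\KK(Y)$ via $\pi^*$, and checking that the extended $\varphi^*$ maps integral elements to integral elements. Both are immediate from the algebraic description of the normalization as the integral closure of $\KK[Y]$ in $\KK(Y)$.
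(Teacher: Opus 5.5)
Your proof is correct, but it constructs the lift more explicitly than the paper does. The paper stays at the level of varieties. It applies the universal property of normalization to the dominant morphism $\varphi\circ\pi$ from the normal variety $\widehat Y$, which gives a unique \emph{morphism} $\widehat\varphi\colon\widehat Y\to\widehat Y$ with $\pi\circ\widehat\varphi=\varphi\circ\pi$. Functoriality then follows from the uniqueness clause, and invertibility is deduced only afterwards, from $\widehat\varphi\circ\widehat{\varphi^{-1}}=\widehat{\mathrm{id}_Y}=\mathrm{id}_{\widehat Y}$.

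You instead build $\widehat\varphi^*$ by hand: you extend $\varphi^*$ to $\KK(Y)$ and restrict it to the integral closure. This gives invertibility immediately via $(\varphi^{-1})^*$, and you prove uniqueness with a fraction-field argument. In effect, you reprove the special case of the universal property that the paper quotes.

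The paper's version is shorter and yields uniqueness among all endomorphisms of $\widehat Y$ covering $\varphi$. Yours establishes uniqueness only among automorphisms. That is all the statement asks for, and it suffices for functoriality, since $\mathrm{id}_{\widehat Y}$ and $\widehat\varphi\circ\widehat\psi$ are automorphisms. In exchange, your argument is self-contained and makes the lift concrete: $\widehat\varphi^*$ is just $\varphi^*$ acting on $\KK(Y)$, restricted to $\KK[\widehat Y]$.
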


\begin{proof}
    By the universal property of the normalization, for any normal affine variety $Z$ and any dominant morphism $f \colon Z \to Y$ there exists a unique morphism $g\colon Z \to \widehat Y$ such that $f = \pi\circ g$. Applying this to $Z = \widehat Y$ and $f = \varphi \circ \pi$, we obtain a unique morphism $\widehat \varphi\colon \widehat Y \to \widehat Y$ satisfying $\pi\circ\widehat\varphi = \varphi\circ\pi$. The uniqueness part of the universal property also yields $\widehat{\mathrm{id}_Y} = \mathrm{id}_{\widehat Y}$ and $\widehat{\varphi\circ \psi } = \widehat \varphi \circ \widehat \psi$ for any two automorphisms $\varphi$ and $\psi$ of $Y$. Applying the latter to $\psi = \varphi^{-1}$, we see that $\mathrm{id}_{\widehat Y} = \widehat{\mathrm{id}_Y} = \widehat{\varphi\circ\varphi^{-1}} = \widehat{\varphi}\circ \widehat{\varphi^{-1}}$, so $\widehat{\varphi}$ is indeed an automorphism of~$\widehat Y$.
\end{proof}

\begin{proposition}\label{embed}
    The group $\Aut(Y)$ of automorphisms of $Y$ is naturally embedded into the group $\Aut(\widehat Y, \widehat E)$ of automorphisms of $\widehat Y$ preserving the exceptional set $\widehat E$ of the normalization.
\end{proposition}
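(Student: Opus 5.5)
We will use the lifting $\varphi\mapsto\widehat\varphi$ of Lemma~\ref{normal_aut} as the embedding $\Aut(Y)\embed\Aut(\widehat Y,\widehat E)$. By the functoriality statement in that lemma, the map $\Phi\colon\varphi\mapsto\widehat\varphi$ is a group homomorphism $\Aut(Y)\to\Aut(\widehat Y)$. Two things remain: $\Phi$ is injective, and its image lies in the subgroup $\Aut(\widehat Y,\widehat E)$ of automorphisms with $\widehat\varphi(\widehat E)=\widehat E$.

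\emph{Injectivity.} Suppose $\widehat\varphi=\mathrm{id}_{\widehat Y}$. Then $\varphi\circ\pi=\pi\circ\widehat\varphi=\pi$. Since $\pi$ is finite and birational, hence surjective, this forces $\varphi=\mathrm{id}_Y$. Dually, $\pi^*$ is the inclusion $\KK[Y]\subseteq\KK[\widehat Y]$, and $\widehat\varphi^*$ restricts to $\varphi^*$ on $\KK[Y]$. So $\widehat\varphi^*=\mathrm{id}$ gives $\varphi^*=\mathrm{id}$.

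\emph{Preservation of $\widehat E$.} First, $\varphi(E)=E$. The reason is that $E$ is the set of non-normal points of $Y$, i.e., the set of points whose local ring is not integrally closed. Any automorphism of $Y$ induces isomorphisms of local rings $\mathcal O_{Y,\varphi(y)}\cong\mathcal O_{Y,y}$, so it maps this set onto itself. Since $\widehat E=\pi^{-1}(E)$ and $\pi\circ\widehat\varphi=\varphi\circ\pi$, we get
\[
\widehat\varphi(\widehat E)\subseteq\pi^{-1}(\varphi(E))=\pi^{-1}(E)=\widehat E.
\]
Indeed, for $x\in\widehat E$ we have $\pi(\widehat\varphi(x))=\varphi(\pi(x))\in E$. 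Applying the same argument to $\varphi^{-1}$, and using $\widehat{\varphi^{-1}}=\widehat\varphi^{-1}$, gives the reverse inclusion. Hence $\widehat\varphi(\widehat E)=\widehat E$.

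The only point needing care is the identification of $E$ with the non-normal locus, together with $\widehat E=\pi^{-1}(E)$; the paper records both when defining the exceptional sets. If one prefers to avoid that identification, $E$ and $\widehat E$ can be characterized intrinsically through the minimality in their definition. If $\pi$ restricts to an isomorphism $\widehat Y\setminus\widehat E\to Y\setminus E$, then it also restricts to an isomorphism $\widehat Y\setminus\widehat\varphi(\widehat E)\to Y\setminus\varphi(E)$, since it is conjugated by $\widehat\varphi$ and $\varphi$. Minimality of the pair $(\widehat E,E)$ then yields $\varphi(E)=E$ and $\widehat\varphi(\widehat E)=\widehat E$ directly.

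Finally, "naturally" refers to the fact that $\Phi$ is canonical, being determined by the universal property of the normalization.
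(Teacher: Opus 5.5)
Your proposal is correct and takes essentially the same approach as the paper. The paper also lifts $\varphi\mapsto\widehat\varphi$ via Lemma~\ref{normal_aut}, notes that $E$ is invariant because it is the non-normal locus, and concludes that $\widehat\varphi$ preserves $\widehat E=\pi^{-1}(E)$; you additionally spell out injectivity (via surjectivity of $\pi$, or the inclusion $\pi^*$), which the paper leaves implicit in the word ``embeds.''
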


\begin{proof}
    By Lemma~\ref{normal_aut}, the map $\varphi \mapsto \widehat \varphi$ embeds the group $\Aut(Y)$ into the group $\Aut(\widehat Y)$ of all automorphisms of $\widehat Y$. Moreover, the exceptional set $E$ of non-normal points of $Y$ is invariant under any automorphism $\varphi \in \Aut(Y)$, hence $\widehat \varphi$ preserves the set $\widehat E = \pi^{-1}(E)$. 
\end{proof}

In Proposition~\ref{cond_non_normal}, we characterize the exceptional set as the vanishing set of the \emph{conductor ideal}.
\begin{definition}
    Let $R$ be an integral domain and let $\mathcal O_R$ be the integral closure of $R$ in the field of fractions $\operatorname{Frac}(R)$. The \emph{conductor} of $R$ is the ideal
    \[C_R = \{r\in R\mid r\mathcal O_R \subseteq R\}\subseteq R.\]
\end{definition}
It is an ideal in both $R$ and $\mathcal O_R$, and it is the largest ideal of $\mathcal O_R$ contained in $R$.

\begin{proposition}\label{cond_non_normal}
    Let $C_Y\subseteq \KK[Y]$ be the conductor ideal of $\KK[Y]$. Then its vanishing set \[\mathbb{V}(C_Y) := \{y\in Y\mid f(y) = 0 \quad \forall f \in C_Y\}\subseteq Y\] coincides with the exceptional set $E$ of the normalization.
\end{proposition}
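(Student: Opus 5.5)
We prove the two inclusions $\mathbb V(C_Y)\subseteq E$ and $E\subseteq \mathbb V(C_Y)$ separately. Throughout, write $R=\KK[Y]$ and $\mathcal O_R=\KK[\widehat Y]$. Since $\KK[\widehat Y]$ is a finitely generated $R$-module, the conductor can also be written as $C_Y=\operatorname{Ann}_R(\mathcal O_R/R)$. The key tool is that forming the conductor commutes with localization: for a multiplicative subset $S\subseteq R$ we have $S^{-1}C_Y = C_{S^{-1}R}$, and the integral closure of $S^{-1}R$ is $S^{-1}\mathcal O_R$. Both facts are standard. The first holds because annihilators of finitely generated modules commute with localization, and finiteness of normalization is where the finite type hypothesis is used.

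\emph{The inclusion $\mathbb V(C_Y)^c\subseteq E^c$.} Let $y\in Y$ with some $f\in C_Y$ satisfying $f(y)\neq 0$. Then $f\mathcal O_R\subseteq R$, so $R_f=(\mathcal O_R)_f$. Hence $\pi$ restricts to an isomorphism $\pi^{-1}(D(f))\to D(f)$, where $D(f)$ is the principal open set $\{f\ne0\}$ in $Y$, and $\pi^{-1}(D(f))$ is the principal open set of $f$ in $\widehat Y$. Therefore the open set $Y\setminus\mathbb V(C_Y)$ lies in the locus over which $\pi$ is an isomorphism. By the minimality of $E$ (equivalently, since $E$ is the set of non-normal points and $R_f$ is normal), we get $y\notin E$.

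\emph{The inclusion $E^c\subseteq \mathbb V(C_Y)^c$.} Suppose $y\notin E$, and let $\mathfrak m\subset R$ be its maximal ideal. Then $\pi$ is an isomorphism over a neighbourhood of $y$, so $R_{\mathfrak m}$ is normal and $(\mathcal O_R)_{\mathfrak m}=R_{\mathfrak m}$. Concretely, take a principal neighbourhood $D(g)\ni y$ inside $Y\setminus E$. Then $R_g=(\mathcal O_R)_g$, since $\pi$ is an isomorphism over $D(g)$ and both are the coordinate rings of the same affine variety. Now choose generators $h_1,\dots,h_k$ of $\mathcal O_R$ as an $R$-module. Each $h_i$ lies in $R_g$, so $g^{n}h_i\in R$ for a common large $n$. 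Then $g^n\mathcal O_R\subseteq R$, i.e. $g^n\in C_Y$. Since $g^n(y)\ne0$, this gives $y\notin\mathbb V(C_Y)$.

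The main obstacle is the second inclusion, and specifically the passage from ``$\pi$ is an isomorphism over a neighbourhood'' to the existence of a single element of $R$ multiplying all of $\mathcal O_R$ into $R$. The argument above handles this through finite generation of $\mathcal O_R$ as an $R$-module, which is exactly the finiteness of normalization. A smaller point needs care: that $E$ is closed and that $Y\setminus E$ is covered by principal opens over which $\pi$ is an isomorphism. This follows because $\pi$ being an isomorphism over an open subset $U$ can be checked on an affine cover of $U$ by principal opens. Combining the two inclusions gives $\mathbb V(C_Y)=E$.
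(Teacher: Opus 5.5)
Your proof is correct and follows essentially the same route as the paper. For $\mathbb V(C_Y)^c\subseteq E^c$ you show that a conductor element $f$ with $f(y)\ne0$ forces $\KK[Y]_f=\KK[\widehat Y]_f$; the paper phrases the same step via the local ring, $\mathcal O_y(Y)=S^{-1}\KK[\widehat Y]$. For the converse you take a principal neighbourhood $D(g)\subseteq Y\setminus E$ and use finite generation of $\KK[\widehat Y]$ over $\KK[Y]$ to get $g^n\in C_Y$, exactly as the paper does. Your remark that conductors commute with localization is correct but is never used, so it can be dropped.
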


\begin{proof}
    Since $\pi$ is birational, we identify $\KK[\widehat Y]$ with a subring of the function field $\KK(Y)$, so that
    \[
        \KK[Y]\subseteq\KK[\widehat Y]\subseteq\KK(Y),
    \]
    and all local rings of points of $Y$ and of $\widehat Y$ are subrings of $\KK(Y)$ as well. Recall that $y\notin E$ if and only if the local ring $\mathcal O_y(Y)$ is integrally closed.

    First we prove the inclusion $E \subseteq \mathbb{V}(C_Y)$. Let $y\in Y\setminus \mathbb{V}(C_Y)$ and choose $f\in C_Y$ with $f(y)\ne 0$. For every $g\in\KK[\widehat Y]$ we have $fg\in\KK[Y]$ by the definition of $C_Y$; since $f$ is invertible in $\mathcal O_y(Y)$, it follows that $g = \frac{fg}{f}\in\mathcal O_y(Y)$. Hence $\KK[\widehat Y]\subseteq\mathcal O_y(Y)$. Now let $S = \{s\in\KK[Y]\mid s(y)\ne 0\}$, so that $\mathcal O_y(Y) = S^{-1}\KK[Y]$. We obtain
    \[
        \mathcal O_y(Y) = S^{-1}\KK[Y] \subseteq S^{-1}\KK[\widehat Y] \subseteq \mathcal O_y(Y),
    \]
    hence $\mathcal O_y(Y) = S^{-1}\KK[\widehat Y]$. Thus $\mathcal O_y(Y)$ is a localization of the integrally closed domain $\KK[\widehat Y]$ and is therefore integrally closed itself, i.e., $y\notin E$.

    Now we prove the inclusion $\mathbb{V}(C_Y) \subseteq E$. Let $y\notin E$. Since $E$ is closed and $y\notin E$, there exists $f\in\KK[Y]$ such that $f$ vanishes on $E$ and $f(y)\ne 0$. Then the principal open subset
    \[
        D(f) = \{x \in Y \mid f(x) \ne 0\}
    \]
    is contained in $Y\setminus E$, over which $\pi$ is an isomorphism.
    Therefore
    \[
        \KK[\widehat Y] \subseteq  \KK[Y]_f.
    \]

    Since $\pi$ is finite, $\KK[\widehat Y]$ is a finitely generated $\KK[Y]$-module; let $g_1,\ldots,g_k\in\KK[\widehat Y]$ be a set of generators. By the above, $g_i = \frac{h_i}{f^{n_i}}$ in $\KK(Y)$ for some $h_i\in\KK[Y]$ and non-negative integers $n_i$. Put $n = \max_i n_i$ and $f' = f^n$. Then $f'g_i = f^{\,n-n_i}h_i\in\KK[Y]$ for every $i$, and hence $f'g\in\KK[Y]$ for every $g = \sum_i a_ig_i\in\KK[\widehat Y]$ with $a_i\in\KK[Y]$. Thus $f'\in C_Y$, while $f'(y) = f(y)^n\ne 0$, so $y\notin\mathbb{V}(C_Y)$.
\end{proof}

\section{Lifting $\Ga$-actions}\label{sec:Ga-lift}

 A \emph{$\Ga$-action} on an affine variety $Y$ is a regular group homomorphism $\Ga\to\Aut(Y)$.  For an affine variety $Y$, such actions correspond bijectively to \emph{locally nilpotent derivations} (LNDs) of $\KK[Y]$, i.e., $\KK$-derivations $\partial\colon\KK[Y]\to\KK[Y]$ such that for every $f\in\KK[Y]$ there exists $n\ge 0$ with $\partial^n(f)=0$.  
 The bijection sends $\partial$ to the action given on functions by $(t,f)\mapsto\exp(t\partial)(f)$; see~\cite{Fr06}. 
 We write $\LND(\KK[Y])$ for the set of all LNDs of~$\KK[Y]$.
An affine variety $Y$ is called \emph{rigid} if $\LND(\KK[Y])=0$, i.e.\ if $Y$ admits no
non-trivial $\Ga$-action; see~\cite{ArGa17}.

We now assume that $Y$ is a rigid non-normal affine surface and keep the notation of Section~\ref{sec:nonnormal} for its normalization. 
It can happen that the normalization $\widehat Y$ is not itself rigid, i.e., admits a non-trivial $\Ga$-action.
Moreover, a $\Ga$-action on $\widehat Y$ preserving $\widehat E$ need not descend to a $\Ga$-action on $Y$ via the embedding of Proposition~\ref{embed}; we illustrate this explicitly in Example~\ref{counterexample}.
Nevertheless, rigidity of $Y$ implies the absence of non-trivial $\Ga$-actions in $\Aut(\widehat Y,\widehat E)$, see Proposition~\ref{sending_down}.

\begin{example}\label{counterexample}
    Let $Y\subseteq \AA^3$ be the surface given by the equation $x^3 - y^3t = 0$. The algebra $\KK[Y]$ of regular functions on $Y$ is the quotient $\KK[x,y,t]/(x^3 - y^3t)$. Since the element $q:=x/y\in\KK(Y)$ satisfies the equation $q^3 - t = 0$, the intergral closure of $\KK[Y]$ in $\KK(Y)$ is the polynomial algebra $\KK[q,y]$. Hence the normalization of $Y$ is the affine plane $\AA^2$ with coordinates $(q,y)$, and the normalization map $\pi\colon \AA^2 \to Y$ is given by 
    \[\pi:(q,y) \mapsto (qy,\, y,\, q^3).\]
    We see that the inverse map $\pi^{-1}$ is well defined at a point $(x,y,t)\in Y$ if and only if $y \ne 0$. So, the exceptional set $E\subseteq Y$ is given by the equation $y = 0$, and $\widehat E = \pi^{-1}(E) \subseteq \AA^2$ is given by the same equation and is isomorphic to the affine line $\AA^1$.

    The map $\widehat \varphi\colon \Ga\times\AA^2 \to \AA^2$ given by $(s,(q,y)) \mapsto (q + s, y)$ defines a $\Ga$-action on $\AA^2$ that preserves the exceptional set~$\widehat E$. However, this $\Ga$-action cannot be obtained by lifting a $\Ga$-action on~$Y$. Indeed, the three points $(1,0)$, $(\varepsilon, 0)$, and $(\varepsilon^2, 0)$, where $\varepsilon\in\KK$ is a primitive cube root of unity, are sent to the same point $(0,0,1)\in Y$ by the map $\pi$.
    At the same time, the images of the points $(1 + s,0)$, $(\varepsilon + s, 0)$, and $(\varepsilon^2 + s, 0)$ under the map $\pi$ are pairwise distinct for a general $s$. 
    This means that a general element of the considered $\Ga$-subgroup of $\Aut(\AA^2)$ does not belong to $\Aut (Y)$, and the embedding $\Aut (Y) \hookrightarrow \Aut (\AA^2, \widehat E)$ from Proposition~\ref{embed} is not an isomorphism.
\end{example}

\begin{proposition}\label{sending_down}
    Let $Y$ be a rigid non-normal affine surface and let $\pi\colon \widehat Y \to Y$ be its normalization. Then there is no non-trivial $\Ga$-action on $\widehat Y$ preserving the exceptional set $\widehat E = \pi^{-1}(E)$.
\end{proposition}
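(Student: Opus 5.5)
Suppose, for contradiction, that $\partial\neq 0$ is an LND of $\KK[\widehat Y]$ whose $\Ga$-action preserves $\widehat E$. We will modify $\partial$ into a non-zero LND of $\KK[Y]$, contradicting rigidity. The standard device is to replace $\partial$ by a \emph{replica} $h\partial$ with $h\in\Ker\partial$, which is again locally nilpotent. If we can choose $h\neq 0$ in $\Ker\partial$ with $h\partial(\KK[\widehat Y])\subseteq C_Y$, then $h\partial$ maps $\KK[Y]\subseteq\KK[\widehat Y]$ into $C_Y\subseteq\KK[Y]$. Its restriction to $\KK[Y]$ is then a non-zero LND of $\KK[Y]$. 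It is non-zero because $\KK[Y]$ and $\KK[\widehat Y]$ have the same fraction field, so a derivation vanishing on $\KK[Y]$ vanishes on $\KK[\widehat Y]$. This contradicts rigidity of $Y$. So the proof reduces to producing a non-zero $h\in\Ker\partial\cap C_Y$, or more generally $h\in\Ker\partial$ with $h\cdot\KK[\widehat Y]\subseteq C_Y$.

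For the key step, let $I\subseteq\KK[\widehat Y]$ be the radical of the ideal $C_Y\KK[\widehat Y]=C_Y$ (recall $C_Y$ is an ideal of $\KK[\widehat Y]$). By Proposition~\ref{cond_non_normal}, $\pi^{-1}(\mathbb V(C_Y))=\widehat E$, so the zero locus of $C_Y$ in $\widehat Y$ is $\widehat E$ and $I=I(\widehat E)$. Since the action preserves $\widehat E$, the ideal $I$ is $\Ga$-stable, i.e.\ $\partial(I)\subseteq I$. Moreover $I^m\subseteq C_Y$ for some $m$ by the Nullstellensatz and Noetherianity. It therefore suffices to find $0\neq h\in\Ker\partial\cap I$, since then $h^m\in\Ker\partial\cap C_Y$ and we take the replica $h^m\partial$.

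To find such an $h$, use the structure of $\Ga$-stable ideals: for an LND $\partial$ and a non-zero $\partial$-stable ideal $I$, one has $I\cap\Ker\partial\neq 0$. Indeed, take $0\neq f\in I$ and let $n$ be maximal with $\partial^n f\neq0$. Then $\partial^n f\in I\cap\Ker\partial$ and is non-zero. Here $I\neq0$ because $\widehat E$ is a proper closed subset; indeed $E\neq Y$ since $Y$ is generically normal.

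The main obstacle, which is mild, is justifying that $\partial(I)\subseteq I$ follows from $\widehat E$ being invariant under the $\Ga$-action. This is the standard equivalence: a closed subset is invariant under $\exp(t\partial)$ for all $t$ if and only if its ideal is $\partial$-stable, obtained by differentiating $\exp(t\partial)(f)\in I$ at $t=0$. Note that $\widehat E$ need not be a divisor, but that does not matter. It may be empty only if $Y$ is normal, which is excluded. Example~\ref{counterexample} shows why the replica is needed: $\partial/\partial q$ itself does not preserve $\KK[Y]$, but $y^m\partial/\partial q$ does.
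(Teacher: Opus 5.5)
Your proof is correct and follows the paper's argument: you take a nonzero element of $\Ker\partial$ inside the $\partial$-stable ideal $I(\widehat E)$ (via $\partial^n f$), use the Nullstellensatz to raise it to a power lying in $C_Y$, and pass to a replica. Your last step is slightly cleaner than the paper's. Since $C_Y$ is an ideal of $\KK[\widehat Y]$, the replica by an element of $C_Y\cap\Ker\partial$ already maps $\KK[Y]$ into $\KK[Y]$, which avoids the paper's generators-and-common-denominator argument for the extra power $f^m$; you also explicitly justify, unlike the paper, that the restricted derivation is nonzero.
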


\begin{proof}
    Suppose that there is a non-trivial $\Ga$-action on $\widehat Y$ preserving~$\widehat E$, and denote by $\partial$ the locally nilpotent derivation corresponding to this action. Then the vanishing ideal $I(\widehat E)\subset \KK[\widehat Y]$ is invariant under $\partial$, and there is a non-zero function $\widehat f \in I(\widehat E)$ that lies in the kernel of $\partial$. Indeed, one can take $\widehat{f} = \partial^n(\widehat{f_0})$, where $\widehat{f_0} \in I(\widehat E)$ is any non-zero function and $n$ is the maximal non-negative integer such that $\partial^n(\widehat{f_0}) \ne 0$.

    Recall that the exceptional set $E \subseteq Y$ is the vanishing set of the conductor ideal $C_Y$ by Proposition~\ref{cond_non_normal}. It follows easily that the set $\widehat E \subseteq \widehat Y$ is also the vanishing set of the ideal $C_Y$ viewed as an ideal in~$\KK[\widehat Y]$. By Hilbert's Nullstellensatz, the ideal $I(\widehat E)$ is the radical of the conductor ideal $C_Y \subseteq \KK[\widehat Y]$, so there exists a positive integer $\ell$ such that $f = \widehat{f}^\ell \in C_Y$. Since $\widehat{f}\in \Ker \partial$, we also have $f = \widehat{f}^\ell \in \Ker \partial$. We conclude that there exists a regular function $f\in\KK[Y]$ vanishing on $E$ and lying in the kernel of~$\partial$.

    Since $f^m \in \Ker \partial$ for any positive integer $m$, the replica $f^m\partial$ of $\partial$ is again a locally nilpotent derivation of $\KK[\widehat Y]$; see~\cite{Fr06}. Any function $g\in \KK[\widehat Y]$ is regular on $Y\setminus E$ and hence is regular on $D(f) = \{y \in Y \mid f(y) \ne 0\}$, since $f$ vanishes on $E$. This implies that any $g\in\KK[\widehat Y]$ equals $\frac{h}{f^m}$ for some $h\in \KK[Y]$ and some non-negative integer $m$. Recall that the algebra $\KK[Y]$ is finitely generated over $\KK$, and let $g_1,  \ldots, g_k\in \KK[Y]$ be a set of its generators. For each $i$ from $1$ to $k$, the function $\partial g_i \in \KK[\widehat Y]$ can be written as $\frac{h_i}{f^{m_i}}$ with $h_i\in\KK[Y]$. Taking $m = \max(m_1, \ldots, m_k)$, we see that the locally nilpotent derivation $f^m\partial$ sends each of $g_1, \ldots, g_k$ to a regular function on~$Y$. Since $g_1, \ldots, g_k$ generate $\KK[Y]$ over $\KK$, the derivation $f^m\partial$ sends the whole algebra $\KK[Y]$ to $\KK[Y]$. Thus the algebra $\KK[Y]$ admits a non-zero locally nilpotent derivation $f^m\partial$, and hence the surface $Y$ admits a non-trivial $\Ga$-action. This contradicts the rigidity of~$Y$.
\end{proof}

\section{NC-completions}\label{sec:NC}

Now we are ready to adapt the proof of Theorem~\ref{th:intro-PZ} to non-normal surfaces in Proposition~\ref{prop:PZ-variation}.
We will need the following notation for its proof.

\begin{notation}
A \emph{normal-crossing completion} (an \emph{NC-completion} for short) $(\widetilde{X}, D)$ of a normal affine surface $X$ is a projective surface $\widetilde{X}$ together with a divisor $D$ such that $X = \widetilde{X}\setminus D$ and the only singularities of $D$ are nodes. For an NC-completion $(\widetilde{X}, D)$, the associated \emph{weighted dual graph} $\Gamma(D)$ is the weighted graph whose vertices are in bijection with the irreducible components of $D$ and whose edges are in bijection with the nodes of $D$. The loops of $\Gamma(D)$ at a vertex $C$ are in bijection with the self-intersection points of the component $C$ of $D$, and the edges joining two distinct vertices $C_1$ and $C_2$ of $\Gamma(D)$ are in bijection with the points of $C_1 \cap C_2$. The weight of $C$ in $\Gamma(D)$ is the self-intersection index~$C^2$ of the component $C$ in $\widetilde X$.
The vertices of $\Gamma(D)$ corresponding to the rational components of $D$ are also called \emph{rational}. 

A vertex of a weighted graph  $\Gamma$ of valence $\ge3$ is called a \emph{branching vertex}, and a vertex of valence 1 is called a \emph{tip}.
An \emph{extremal linear segment} of $\Gamma$ is a maximal connected subgraph of $\Gamma$ without branching vertices that contains a tip. 
 It is called \emph{admissible} if all its weights are at most $-2$.
For more details and results on these notions, we refer to~\cite{PeZa26}.
\end{notation}

Theorem~\ref{th:intro-PZ} is based on the following rigidity criterion.

\begin{theorem}[{\cite[Theorem~1.0.3(b)]{PeZa26}}]\label{th:PZ-NC} 
    Let $Y$ be a normal affine surface over $\KK$ and  $(X,D)$ be a minimal completion of $Y$ by a normal-crossing divisor. 
Then $\Aut^{\circ} (Y)$ is an algebraic group if and only if every extremal linear segment of the weighted dual graph $\Gamma(D)$ is admissible.
\end{theorem}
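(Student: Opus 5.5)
This criterion is quoted from \cite{PeZa26}; here is how I would prove it, treating the two implications separately and working entirely with the boundary graph $\Gamma(D)$.

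\emph{Non-admissible segment $\Rightarrow$ $\Aut^\circ(Y)$ not algebraic.} Suppose some extremal linear segment of $\Gamma(D)$ contains a vertex of weight $\ge -1$. Minimality of the completion forbids $(-1)$-curves of valence $\le 2$ that could be contracted while keeping normal crossings. Starting from the tip, I would perform elementary transformations (blow up a point of a component, then contract the proper transform of the adjacent $(-1)$-curve) along the segment. These shift weights, and the goal is to reach a completion in which that tip is a smooth rational curve $C$ with $C^2=0$. The linear system $|C|$ is then a base-point-free pencil, so it defines a $\PP^1$-fibration of the completion whose restriction to $Y$ is an $\AA^1$-fibration $Y\to B$ over an affine curve $B$, because $C$ meets the rest of $D$ in one point. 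A standard construction (Flenner–Kaliman–Zaidenberg: replicas of a fibrewise translation) produces a non-trivial $\Ga$-action on $Y$. Then $\Aut^\circ(Y)$ is not algebraic by \cite[Corollary~1.2]{FlZa05}.

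\emph{All segments admissible $\Rightarrow$ $\Aut^\circ(Y)$ algebraic.} Contract every admissible extremal linear segment of $D$. Since all their weights are $\le -2$, each such chain contracts to a cyclic quotient singularity. This gives a normal projective surface $\bar X\supset Y$ with a boundary $\bar D$ whose dual graph has no non-admissible extremal part. The claim is that every $\varphi\in\Aut(Y)$ extends to a biregular automorphism of $\bar X$. Granting this, $\Aut(Y)\hookrightarrow\Aut(\bar X,\bar D)$. The latter is a closed subgroup of the algebraic group $\Aut(\bar X)$, or of the automorphism group of the minimal resolution of $\bar X$, so it has finitely many components modulo a linear algebraic group. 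Hence $\Aut^\circ(Y)$ is algebraic. By \cite{Ii77}, \cite{Kr17}, a connected algebraic group acting effectively on an affine surface $\not\cong\AA^1$ without $\Ga$-subgroups is a torus of rank $\le 2$.

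\emph{Main obstacle: the extension step.} Regard $\varphi$ as a birational self-map of the smooth completion $(X,D)$, and take a minimal resolution of indeterminacy $X\xleftarrow{\sigma} W\xrightarrow{\tau} X$. All base points lie on $D$, and the exceptional curves of $\sigma$ and $\tau$ lie over $D$. If $\varphi$ does not extend, the last curve $E$ blown up by $\sigma$ is a $(-1)$-curve not contracted by $\tau$, so $\tau(E)$ is a component of $D$. Tracing the blow-up sequence backwards, I would show that $E$ has valence $\le 2$ in the total boundary. Minimality then forces $E$ to sit, after the $\tau$-contractions, inside an extremal linear segment containing a curve of weight $\ge -1$, or to create a contractible curve in $D$. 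Either outcome contradicts admissibility or minimality. The delicate part is bookkeeping the weights along the chain: when base points lie on the admissible segments, one must check that the resulting birational map descends to an isomorphism of $\bar X$ near the quotient singularities. I would first handle base points on branching or non-rational components, which are forced to be invariant, and then treat the segments using continued fractions of their weights.
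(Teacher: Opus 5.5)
Your first implication is fine and is essentially the argument from \cite{PeZa26} that the paper reuses in Proposition~\ref{prop:PZ-variation}. The second implication, however, rests on a false claim: not every $\varphi\in\Aut(Y)$ extends to a biregular automorphism of $\bar X$. Take $Y=(\KK^*)^2$, $X=\PP^2$, and $D$ the triangle $XYZ=0$ with weights $1,1,1$. Then $\Gamma(D)$ is a cycle, so there are no extremal linear segments. The hypothesis holds vacuously, and $\bar X=X$. The shear $(x,y)\mapsto(x,xy)$ is an automorphism of $Y$ inducing $[X:Y:Z]\mapsto[XZ:XY:Z^2]$. Its base points are the nodes $[1:0:0]$ and $[0:1:0]$ of $D$, plus one infinitely near point, which is again a node of the total boundary. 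Automorphisms of $\PP^2$ preserving $D$ induce only $T\rtimes S_3$ on $Y$ (with $T=(\KK^*)^2$ acting by translations), whereas $\Aut(Y)=T\rtimes\mathrm{GL}_2(\mathbb{Z})$. Switching to another completion does not help, since a complete fan has only finitely many symmetries.

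The same example shows where your ``main obstacle'' argument breaks. There the last exceptional curve of $\sigma$ is a $(-1)$-curve lying on a cycle of the boundary graph, and after the $\tau$-contractions nothing contradicts minimality or admissibility. Your argument never uses that you are in $\Aut^\circ(Y)$ rather than $\Aut(Y)$, so it proves too much and cannot be repaired as stated.

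The missing idea is to separate the discrete part from the neutral component. \cite{PeZa26} proves, and the paper invokes in the proof of Proposition~\ref{prop:PZ-variation}, two things:
\begin{enumerate}
\item[(1)] under admissibility, every birational transformation of the boundary graph is inner \cite[Proposition~4.2.5.3]{PeZa26}, so $\Aut(Y)$ maps to the group of inner transformations;
\item[(2)] only the neutral component $\Inn^\circ(X,D)$ of that group is algebraic and acts regularly on the completion \cite[Proposition~2.3.3]{PeZa26}.
\end{enumerate}
Inner transformations such as the shear need not be regular. So connectedness must enter exactly at the passage from ``inner'' to ``biregular''.
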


\begin{proposition}\label{prop:PZ-variation}
    Let $X$ be a normal affine surface and let $E\subset X$ be a proper closed subset.
    If the subgroup $\Aut(X,E)\subset\Aut(X)$ of automorphisms of $X$ that send $E$ to itself does not contain a $\Ga$-subgroup, 
    then its neutral component $\Aut^\circ(X,E)$ is algebraic.
\end{proposition}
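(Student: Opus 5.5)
We follow the proof of Theorem~\ref{th:PZ-NC} in \cite{PeZa26}, replacing $\Aut(X)$ by the closed ind-subgroup $\Aut(X,E)$. It is closed because the condition $\varphi(E)=E$ can be written as $\varphi^*(I(E))\subseteq I(E)$ together with the same condition for $\varphi^{-1}$, and these are closed conditions on each filtration piece. First we reduce to the case where $E$ has pure dimension one. Since $\Aut^\circ(X,E)$ is connected, it preserves each irreducible component of $E$, so it fixes the finitely many isolated points of $E$. Replacing $E$ by its one-dimensional part $E_1$, we get $\Aut^\circ(X,E)\subseteq\Aut^\circ(X,E_1)$. Moreover, $\Aut(X,E_1)$ contains no $\Ga$-subgroup either: a $\Ga$-subgroup is connected, so it would fix the isolated points and hence preserve $E$. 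A closed ind-subgroup of an algebraic group is algebraic, so it suffices to treat $E=E_1$. If $E=\emptyset$, the statement is exactly Theorem~\ref{th:PZ-NC} combined with Theorem~\ref{th:intro-PZ}.

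Next we pass to a complement. Choose $f\in I(E)$ nonzero and set $X'=X\setminus \mathbb V(f)$. This is again a normal affine surface. Elements of $\Aut^\circ(X,E)$ need not preserve $\mathbb V(f)$, so instead we use a minimal NC-completion $(\widetilde X,D)$ of $X$ and enlarge it to an NC-completion $(\widetilde X',D')$ with $D'=D\cup\overline E$ after resolving, so that $X\setminus E=\widetilde X'\setminus D'$. The group $\Aut(X,E)$ acts on $X\setminus E$, which is quasi-affine rather than affine in general. We therefore work directly with the pair $(\widetilde X, D\cup \overline E)$ and the birational machinery of \cite{PeZa26}. Their argument shows the following: if some extremal linear segment of the boundary graph is non-admissible, then there is a $\Ga$-action, built from an $\AA^1$-fibration realized by a pencil attached to that segment. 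Otherwise every automorphism acts on the boundary graph of a fixed completion with bounded degree, which gives algebraicity.

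The key step is to show that, for $\varphi\in\Aut^\circ(X,E)$, the induced birational maps of $\widetilde X$ have bounded degree, using the fact that they preserve the curves $\overline E$. Suppose $\Aut^\circ(X,E)$ is not algebraic. The \cite{PeZa26} argument then produces an infinite family of elementary transformations along a non-admissible extremal linear segment of $\Gamma(D)$, and hence an $\AA^1$-fibration $X\to B$ whose generic fibre is acted on by a root subgroup. We must show that such a family yields a $\Ga$-subgroup inside $\Aut(X,E)$. Each component of $E$ is preserved by every element of the family, and an infinite-dimensional family preserving a curve forces that curve to be a union of fibres of the $\AA^1$-fibration. Given this, we can take the replicas $h\partial$, where $h$ ranges over functions on $B$ vanishing on the images of these fibres, and each such replica is a locally nilpotent derivation preserving $I(E)$. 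This contradicts the hypothesis.

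The main obstacle is the last claim: that a curve preserved by a non-algebraic connected family must consist of fibres of the $\AA^1$-fibration, and conversely that the $\Ga$-actions extracted in \cite{PeZa26} can be chosen fibre-preserving for that fibration. We would first try to check this on the level of the dual graph. A component of $\overline E$ that is horizontal for the fibration meets the boundary in the segment being contracted, and infinitely many distinct elementary transformations along that segment would move it, which is impossible if it is preserved.
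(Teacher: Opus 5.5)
There is a genuine gap, and it sits exactly at the step you flag as the main obstacle.

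\textbf{The wrong graph.} You run the admissibility dichotomy of \cite{PeZa26} on $\Gamma(D)$, the graph of a completion of $X$ alone, and that graph does not see $E$ at all. For example, for $X=\AA^2$ the graph $\Gamma(D)$ has a non-admissible extremal segment by Theorem~\ref{th:PZ-NC}. Yet $\Aut(\AA^2,E)$ contains no $\Ga$-subgroup for $E=\{xy=1\}$: such a subgroup would fix $E\cong\mathbb{G}_m$ pointwise, which is impossible.

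Consequently, applying \cite{PeZa26} to $\Gamma(D)$ tells you nothing about $\Aut(X,E)$ beyond what it tells you about $\Aut(X)$. It does not turn non-algebraicity of $\Aut^\circ(X,E)$ into a family of elements of $\Aut(X,E)$ tied to one $\AA^1$-fibration. Nor does it let you pick, among the many $\AA^1$-fibrations of $X$, one for which $E$ is vertical. Your claim that ``an infinite-dimensional family preserving a curve forces it to be a union of fibres'' is unproved, and not even well posed, since it is unclear which fibration is meant. The dual-graph sketch you give for it is a heuristic, not an argument. You do mention the pair $(\widetilde X,D\cup\overline E)$, but nothing afterwards uses $\overline E$ inside the combinatorics.

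\textbf{The missing idea.} Put $\overline E$ into the graph. After making $\overline E$ meet $D$ transversally, add to $\Gamma(D)$ one vertex $v$ for $\overline E$, and declare $v$ non-rational in the sense of \cite[Definition~3.1.1]{PeZa26}, i.e.\ non-contractible, with irrelevant weight. Birational transformations of the enlarged graph $\widetilde\Gamma$ then come from birational transformations of $(\widetilde X,D)$ that never touch $X$. This also removes your worry that $X\setminus E$ is only quasi-affine. You then run the dichotomy on $\widetilde\Gamma$:
\begin{enumerate}
\item If $\widetilde\Gamma$ has a non-admissible extremal segment, \cite[Proposition~5.0.4]{PeZa26} gives $\widetilde\Gamma'$ with a $0$-tip $w$. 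Its unique neighbour is a component of the connected boundary $D'$, so $\overline E\cdot w=0$. Hence a general fibre $F\equiv w$ of the induced $\PP^1$-fibration misses $\overline E$, i.e.\ $E$ lies in finitely many fibres. A replica $h\partial$, with $h$ a function on the base vanishing at the images of these fibres, is then a $\Ga$-action fixing $E$ pointwise. This is precisely your ``main obstacle'', obtained for free.
\item If all extremal segments of $\widetilde\Gamma$ are admissible, \cite[Proposition~4.2.5.3]{PeZa26} makes every birational transformation of $\widetilde\Gamma$ inner. Hence $\Aut^\circ(X,E)$ lies in the algebraic group $\Inn^\circ(\widetilde X,D)$.
\end{enumerate}
Note the direction of the logic: no $\Ga$-subgroup $\Rightarrow$ $\widetilde\Gamma$ admissible $\Rightarrow$ inner. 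Nothing has to be extracted from non-algebraicity.

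\textbf{A smaller error.} In your reduction to pure dimension one, a $\Ga$-subgroup of $\Aut(X,E_1)$ need not fix the isolated points of $E$. Those points lie in no invariant set, so connectedness gives nothing. The conclusion survives, but only after passing to a replica $h\partial$ with $h\in\Ker\partial$ vanishing at those points.
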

\begin{proof}
    We adapt the proof of Theorem~\ref{th:intro-PZ} by including $E$ into the boundary divisor of an NC-completion as a ``component'' that is preserved by birational transformations of the completion.
    We follow the notation of \cite{PeZa26} for the rest of the proof.

    Consider an NC-completion $(\widetilde X, D)$ of $X$ and let $\overline{E}$ be the closure of $E$ in $\widetilde X$.
    We may assume that the intersections of $\overline{E}$ with $D$ are transversal.
    
    Let us extend the dual graph $\Gamma(D)$ by adding a vertex $v$ corresponding to $\overline{E}$ and edges corresponding to its intersection points with the components of $D$. 
    We mark $v$ as a \emph{non-rational} vertex in the sense of \cite[Definition~3.1.1]{PeZa26}, meaning that it is forbidden to contract $v$; the weight of $v$ is then irrelevant. 
    We denote the resulting graph by $\widetilde \Gamma$. 
    
    Let us show that $\widetilde\Gamma$ does not contain a non-admissible extremal segment. 
    Assume the contrary; then there exists a birational transformation $\psi\colon \widetilde \Gamma\dashrightarrow\widetilde \Gamma'$ such that  $\widetilde \Gamma'$ contains a tip $0$-vertex $w$; see \cite[Proposition~5.0.4]{PeZa26} and its proof. 
    
    Since $v$ is non-rational, $\psi$ induces a birational transformation $\tilde\psi\colon(\widetilde X, D)\dashrightarrow (\widetilde X', D')$. In particular, $\widetilde\Gamma'$ is obtained from $\Gamma(D')$ by adding $v$ and the appropriate edges.

    Since $X\embed \widetilde X'$ is affine, a general fiber of the $\PP^1$-fibration $\pi\colon \widetilde X'\to B$ induced by the zero curve $w$, as in the proof of \cite[Proposition~5.0.4]{PeZa26}, meets $D'$ at one point and hence is disjoint from $E$. 
    Thus, $E$ is contained in a finite number of fibers of $\pi$, and there exists a non-trivial $\Ga$-action on $X$ along $\pi$ that fixes $E$ pointwise. 
    This contradicts the assumption that $\Aut(X,E)$ contains no $\Ga$-subgroups. 
    
    So, $\widetilde\Gamma$ does not contain a non-admissible extremal segment, and by \cite[Proposition~4.2.5.3]{PeZa26} any birational transformation of $\widetilde \Gamma$ is inner.
    Since $\Aut(X,E)$ acts regularly on $E$, we have a natural map $\Aut(X,E)\to\Inn(\widetilde{\Gamma})$; cf.~\cite[Lemma~3.5.8]{PeZa26}.
    Thus, $\Aut^\circ(X,E)$ is a subgroup of $\Inn^\circ(\widetilde X,D)$, which is algebraic and regular on $\widetilde X$ by \cite[Proposition~2.3.3]{PeZa26}.
\end{proof}

\section{Proof of the main result}\label{sec:main}

\begin{proof}[Proof of Theorem~\ref{main_result}]
    The case of a normal surface $Y$ is covered by Theorem~\ref{th:intro-PZ}.
    Assume that $Y$ is not normal, and let $\pi\colon \widehat Y \to Y$ be its normalization.
    
    In the notation of Proposition~\ref{sending_down}, the subgroup $\Aut(\widehat Y,\widehat E)$ of automorphisms of $\widehat Y$ preserving $\widehat E$ does not contain a $\Ga$-subgroup.
    By Proposition~\ref{prop:PZ-variation}, applied to $X = \widehat Y$ and $E = \widehat E$, its neutral component $\Aut^\circ(\widehat Y,\widehat E)$ is an algebraic group.
    Finally, by Proposition~\ref{embed}, the neutral component $\Aut^\circ(Y)$ is embedded as a subgroup of $\Aut^\circ(\widehat Y,\widehat E)$. So, $\Aut^\circ(Y)$ is an affine algebraic group. Moreover, since $Y$ is rigid, $\Aut^\circ(Y)$ contains no $\Ga$-subgroups and hence no unipotent elements. Therefore, $\Aut^\circ(Y)$ is an algebraic torus.
\end{proof}

\bibliographystyle{plainurl}
\bibliography{references} 

%\nocite{*} % remove \nocite{*} before submission

\end{document}